\theoremstyle{plain}
\newcommand{\R}{\mathbb{R}}
\newcommand{\ignore}[1]{}
\begin{document}
\title[Second-order Morley's without Large Cardinals]{The Second-order Version of Morley's Theorem on the Number of Countable Models does not Require Large Cardinals}

\author[F. D. Tall]{Franklin D. Tall${}^1$}
\address[F. D. Tall]{University of Toronto, Department of Mathematics, 40 St. George St., Toronto, Ontario, Canada M5S 2E4}
\email{f.tall@utoronto.ca}
\urladdr{http://www.math.toronto.edu/tall/}
\thanks{${}^1$ Supported by Natural Sciences and Engineering Research Council Grants RGPIN-2016-06319 and RGPIN-2023-03420}

\author[J. Zhang]{Jing Zhang${}^2$}
\address[J. Zhang]{University of Toronto, Department of Mathematics, 40 St. George St., Toronto, Ontario, Canada M5S 2E4}
\email{jingzhan@alumni.cmu.edu}
\urladdr{}
\thanks{${}^2$ Supported by Natural Sciences and Engineering Research Council discovery grants}
\date{\today}

\subjclass[2020]{Primary 03C85, 03C55, 03E35, 03C52}

\keywords{Morley's theorem, countable models, Cohen forcing, $\sigma$-projective equivalence relations, large cardinals, generic absoluteness.}

\begin{abstract}
    The consistency of a second-order version of Morley’s Theorem on the number of countable models was proved in [EHMT23] with the aid of large cardinals. We here dispense with them.
\end{abstract}
\maketitle
\section{Introduction}

\emph{Vaught's Conjecture} \cite{Vaught}, which asserts that a countable first-order theory must have either at most countably many or exactly $2^{\aleph_0}$ many non-isomorphic countable models, is one of the most important problems in Model Theory. A strong positive result about Vaught's Conjecture is a result of the late Michael Morley \cite{Morley1970} which states that the number of isomorphism classes of countable models of a countable first-order theory is always at most $\aleph_1$ or exactly $2^{\aleph_0}$. Under this formulation, the result follows trivially from the continuum hypothesis. To avoid this artifact, one can identify countable models with members of the Cantor set (see \cite{Gao} or \cite{Eagle2023}) and prove: 

\begin{thm}[Absolute Morley]
    Let $T$ be a first-order theory (or more generally, a sentence of $L_{\omega_1, \omega}$) in a countable signature. Then either $T$ has at most $\aleph_1$ isomorphism classes of countable models, or there is a perfect set of non-isomorphic countable models of $T$.    
\end{thm}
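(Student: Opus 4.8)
The plan is to run the classical descriptive set-theoretic argument, reducing the dichotomy to a perfect set theorem for Borel equivalence relations. First I would realize the countable models of $T$ as points of a Polish space: fixing the underlying set of every model to be $\omega$, a structure in the given countable signature is coded by a point of a product space $X$ (a copy of $2^\omega$ or $\omega^\omega$, with one coordinate for each atomic fact about elements of $\omega$), on which the infinite symmetric group $S_\infty$ acts by the logic action so that orbits are exactly isomorphism classes. By the Lopez-Escobar theorem the class $\Mod(T)$ of models of any $L_{\omega_1,\omega}$-sentence $T$ (in particular any countable first-order theory) is a Borel, $S_\infty$-invariant subset of $X$, and the isomorphism relation $\cong$ restricted to $\Mod(T)$ is $\Sigma^1_1$, being the projection of the Borel set $\{(M,N,g) : g\cdot M = N\}$.

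Next I would stratify $\cong$ by the Scott analysis. For each countable ordinal $\alpha$ let $\sim_\alpha$ be the relation of $\alpha$-back-and-forth equivalence (equivalently, agreement on all $L_{\infty\omega}$-formulas of rank $\alpha$). Each $\sim_\alpha$ is a Borel equivalence relation, defined by a transfinite recursion of countable length; the relations grow finer as $\alpha$ increases; and Scott's isomorphism theorem gives, for countable models, $M \cong N$ iff $M \sim_\alpha N$ for all $\alpha < \omega_1$. The crucial refinement is that each model $M$ has a countable Scott rank $\rho(M)$ above which the back-and-forth stabilizes, so that the isomorphism class of $M$ coincides with its $\sim_{\beta}$-class for a single countable ordinal $\beta = \beta(M)$.

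With this in hand the dichotomy splits into two cases, both carried out inside $\Mod(T)$. If for some $\alpha < \omega_1$ the Borel equivalence relation $\sim_\alpha$ has uncountably many classes, then Silver's perfect set theorem yields a perfect set of pairwise $\sim_\alpha$-inequivalent points; since $\cong \subseteq \sim_\alpha$, these models are pairwise non-isomorphic, giving the perfect set required in the second alternative. Otherwise every $\sim_\alpha$ has only countably many classes. Then sending each isomorphism class $[M]_\cong$ to the pair $(\beta(M), [M]_{\sim_{\beta(M)}})$ is injective, because $[M]_\cong$ equals $[M]_{\sim_{\beta(M)}}$ exactly; as there are $\aleph_1$ choices of the countable ordinal $\beta(M)$ and, for each, only countably many $\sim_\beta$-classes, the number of isomorphism classes is at most $\aleph_1 \cdot \aleph_0 = \aleph_1$, which is the first alternative.

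The main obstacle is the Scott-analytic input: establishing that the back-and-forth relations $\sim_\alpha$ are Borel and that each countable model is pinned down by its class at a countable Scott rank, so that every isomorphism class appears as a single $\sim_\alpha$-class. Once this is secured, the two alternatives fall out of the Silver dichotomy and a routine cardinality count. Alternatively, one may bypass the explicit Scott analysis and invoke Burgess's trichotomy directly: every $\Sigma^1_1$ equivalence relation has at most $\aleph_1$ classes or a perfect set of pairwise inequivalent elements. The genuine work is then hidden in Burgess's theorem, whose proof runs through effective descriptive set theory and the Gandy--Harrington topology.
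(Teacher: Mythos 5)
Your proposal is correct, but it is worth noting that the paper does not prove this statement at all: it is quoted as background, with the only indication of a proof being the remark that it follows from Burgess's dichotomy for $\bSigma^1_1$ equivalence relations --- which is exactly your ``alternative'' route in the last paragraph. Your primary route (Lopez--Escobar to get $\Mod(T)$ Borel, Scott's back-and-forth stratification $\sim_\alpha$, Silver's dichotomy applied to each Borel relation $\sim_\alpha\restriction\Mod(T)$, and the $\aleph_1\cdot\aleph_0$ count when every level has countably many classes) is the classical self-contained argument, essentially Morley's original proof with Silver's theorem substituted for the perfect set property of analytic sets (which is all Morley had in 1970 and all one needs: the set of $\alpha$-characteristics realized is analytic, so if uncountable it contains a perfect set). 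What the Scott-analysis route buys is that it avoids the full strength of Burgess's theorem and the Gandy--Harrington machinery; what the Burgess route buys is the stronger statement for arbitrary $\bSigma^1_1$ equivalence relations, which is the form the paper actually needs as a template for its $\sigma$-projective generalization. Two small points to tighten: (i) the precise Scott fact you need is that for countable $N$, $N\sim_{\beta}M$ implies $N\cong M$ once $\beta\geq\rho(M)+\omega$ (stabilization at $\rho(M)$ alone is not quite the right formulation), and your injectivity argument should use that ordinal; (ii) if you do reduce to Burgess's theorem as stated for equivalence relations on all of $\R$, you must extend $\cong\restriction\Mod(T)$ to the whole space (say by collapsing the complement of $\Mod(T)$ to one class), and then a perfect set of pairwise inequivalent reals may contain one point outside $\Mod(T)$; passing to a perfect subset of the remainder repairs this.
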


The isomorphism relation among countable models can be formulated as a $\mathbf{\Sigma}_1^1$
equivalence relation, using a code for $T$ as a parameter. It is then easy to see that the following result \cite{Burgess} (see \cite{Gao}) is a strengthening of the Absolute Morley Theorem:

\begin{thm}
    Let $E$ be a $\mathbf{\Sigma}_1^1$ equivalence relation on $\R$. If there is no perfect set of pairwise inequivalent reals, then there are at most $\aleph_1$ equivalence classes.
\end{thm}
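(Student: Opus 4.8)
The plan is to prove this via the Burgess-style analysis of $\bSigma^1_1$ equivalence relations, using the Gandy--Harrington topology or an effective descriptive-set-theoretic stratification. The core dichotomy I would aim for is a Silver-type theorem for $\bSigma^1_1$ equivalence relations: either there is a perfect set of pairwise inequivalent reals (the ``many classes'' side), or the relation admits a reasonably definable transversal that can be indexed by ordinals below $\omega_1$. The hypothesis rules out the perfect-set side, so I would extract from its failure a bound of $\aleph_1$ on the number of classes.

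First I would fix a $\bSigma^1_1$ formula defining $E$ (with a real parameter $p$) and relativize everything to $p$, so that without loss of generality $E$ is lightface $\Sigma^1_1$. The main structural tool is to approximate $E$ from below by an increasing, $\omega_1$-indexed sequence of Borel equivalence relations $\langle E_\alpha : \alpha < \omega_1 \rangle$ whose union is $E$; such a representation comes from the boundedness/unfolding analysis of $\Sigma^1_1$ sets (each $E_\alpha$ is obtained by truncating the associated tree/well-founded part at rank $\alpha$). Each $E_\alpha$, being Borel, falls under the classical Silver dichotomy: either $E_\alpha$ has countably many classes, or it has a perfect set of $E_\alpha$-inequivalent reals. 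Since $E_\alpha \subseteq E$, a perfect set of $E_\alpha$-inequivalent reals is automatically a perfect set of $E$-inequivalent reals, which is forbidden by hypothesis. Hence every $E_\alpha$ has only countably many classes.

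The decisive step is then to argue that the total number of $E$-classes is at most $\aleph_1$. Here I would use that $E = \bigcup_{\alpha < \omega_1} E_\alpha$ together with the fact that the sequence is increasing: each $E$-class is a union of a coherent system of $E_\alpha$-classes, and picking for each $\alpha$ the (countably many) $E_\alpha$-classes gives a total count of $\aleph_1 \cdot \aleph_0 = \aleph_1$ pieces, which can be assembled into at most $\aleph_1$ many $E$-classes. More carefully, since every pair of $E$-equivalent reals is already $E_\alpha$-equivalent for some $\alpha < \omega_1$ (by the stratification), a transversal for $E$ can be built by a transfinite recursion of length $\omega_1$: at stage $\alpha$ one adds representatives for the new classes that appear, and at each stage only countably many genuinely new classes can be introduced without creating a perfect set of inequivalent reals.

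The hard part, I expect, will be making the stratification $\langle E_\alpha : \alpha < \omega_1\rangle$ genuinely \emph{work} as an increasing exhaustion by Borel equivalence relations --- the naive rank truncation of the $\Sigma^1_1$ definition need not be transitive, so one must symmetrize and transitively close at each level while controlling definability and complexity, and then verify that the union recovers exactly $E$ rather than something larger. The other delicate point is the counting argument: one must ensure that ruling out a perfect set of $E$-inequivalent reals at every finite Borel level actually propagates to a global $\aleph_1$ bound, which ultimately rests on a boundedness argument (a $\Sigma^1_1$-bounding or a reflection principle) guaranteeing that the class structure stabilizes below $\omega_1$. Once these effective-descriptive-set-theory ingredients are in place, the final cardinality count is routine.
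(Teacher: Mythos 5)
First, a point of orientation: the paper does not prove this theorem at all --- it is quoted as a known result of Burgess (with \cite{Gao} as a reference), so there is no in-paper proof to match. Your overall skeleton (stratify $E$ into $\omega_1$ many Borel equivalence relations, apply Silver's dichotomy level by level, then count) is indeed the shape of Burgess's argument, but your stratification runs in the wrong direction, and this breaks the proof at its two load-bearing points. A $\mathbf{\Sigma}^1_1$ relation is naturally approximated \emph{from above}: the rank analysis applies to the $\mathbf{\Pi}^1_1$ complement, writing $\neg E$ as an increasing $\omega_1$-union of Borel sets and hence $E=\bigcap_{\alpha<\omega_1}E_\alpha$ as a \emph{decreasing} intersection of Borel relations, with $E\subseteq E_\alpha$; a reflection argument is then needed to arrange that the $E_\alpha$ are transitive on a club. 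With your increasing union $E_\alpha\subseteq E$, the claim that ``a perfect set of $E_\alpha$-inequivalent reals is automatically a perfect set of $E$-inequivalent reals'' is simply false --- the implication $x\mathrel{\neg E_\alpha}y\Rightarrow x\mathrel{\neg E}y$ requires $E\subseteq E_\alpha$, i.e.\ the decreasing picture. So Silver's theorem applied to your $E_\alpha$ rules out nothing.

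The counting step exposes the same problem even more starkly: if $E_\alpha\subseteq E$ and $E_\alpha$ has only countably many classes, then already for $\alpha=0$ every $E$-class is a union of $E_0$-classes and hence $E$ itself has only countably many classes. Your argument, if it worked, would prove that every thin $\mathbf{\Sigma}^1_1$ equivalence relation has countably many classes, which is refuted by the isomorphism relation on countable well-orderings (thin, exactly $\aleph_1$ classes). Finally, even after correcting the direction, the closing count is not routine: with $E=\bigcap_\alpha E_\alpha$ each class corresponds to a branch through a tree of height $\omega_1$ with countable levels, and such trees can have more than $\aleph_1$ branches (Kurepa trees), so the $\aleph_1$ bound genuinely requires the $\mathbf{\Sigma}^1_1$-boundedness theorem to convert ``$\geq\aleph_2$ classes'' into a perfect set of inequivalent reals. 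You flag boundedness as a delicate point at the end, which is the right instinct, but as written the stratification and the two deductions drawn from it do not work.
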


\emph{Second-order logic} is the natural generalization of first-order logic to a two-sorted language with variables for relations as well as for individuals. For a precise formulation of its syntax and semantics, see e.g. \cite{Eagle2023}. We then can formulate:

\emph{Second-order Absolute Morley}: If $T$ is a second-order theory in a countable signature, then either $T$ has at most $\aleph_1$ isomorphism classes of countable models, or there is a perfect set of (in particular $2^{\aleph_0}$ many) non-isomorphic countable models of $T$.

The \emph{$\sigma$-projective} hierarchy is obtained by extending the projective hierarchy up through the countable ordinals. See \cite{Kechris} and \cite{AMS21}. We can then formulate:

\emph{Second-order Absolute Morley for $\sigma$-projective equivalence relations} Let $E$ be a $\sigma$-projective equivalence relation on $\R$. If there is no perfect set of pairwise $E$-inequivalent reals, then $E$ has at most $\aleph_1$ equivalence classes.

Note that \emph{Second-order Absolute Morley for $\sigma$-projective equivalence relations} implies \emph{Second-order Absolute Morley}. This is done in \cite[Section 2.3]{Eagle2023} by showing that the complexity of the class of reals that code countable models of a given second-order theory is  a countable intersection of projective sets, in particular, is $\sigma$-projective.

This motivates the following apparently slightly weaker assertion:

\emph{Second-order Absolute Morley for countable intersections of projective sets} Let $E$ be an equivalence relation on $\R$ which is a countable intersection of projective sets. If there is no perfect set of pairwise $E$-inequivalent reals, then $E$ has at most $\aleph_1$ equivalence classes.

We \emph{do not know} if \emph{Second-order Absolute Morley for countable intersections of projective sets} is strictly weaker than \emph{Second-order Absolute Morley for $\sigma$-projective equivalence relations}.

In \cite{Eagle2023}, the following results are established:

\begin{thm*}[Theorem A]
    Force over $L$ by first adding $\aleph_2$ Cohen reals and then $\aleph_3$ random reals. In the resulting universe of set theory, $2^{\aleph_0} = \aleph_3$ but there is a second-order theory $T$ in a countable signature such that the number of non-isomorphic models of $T$ is exactly $\aleph_2$.
\end{thm*}

\begin{thm*}[Theorem C]
    If there are infinitely many Woodin cardinals, then there is a model of set theory in which Second-order Absolute Morley for countable intersections of projective sets holds.
\end{thm*}

The authors of \cite{Eagle2023} state as their first problem:

\textit{Prove that large cardinals are necessary to prove the consistency of Second-order Absolute Morley.}

We shall refute that conjecture here by proving:

\begin{thm}\label{thm:refute}
    Adjoin at least $\aleph_2$ Cohen reals to a model of $\CH$. Then Second-order Absolute Morley for $\sigma$-projective equivalence relations holds in the resulting model.
\end{thm}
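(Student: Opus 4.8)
The plan is to prove a generic absoluteness statement: after adding $\aleph_2$ (or more) Cohen reals to a model of $\CH$, the statement ``$E$ has a perfect set of pairwise inequivalent reals'' is suitably absolute for $\sigma$-projective $E$, and then to combine this with a counting argument. The overall strategy should mirror the large-cardinal proof of Theorem C, but with the Woodin-cardinal input replaced by what Cohen forcing over $\CH$ gives us for free. Let me think about what that is.

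The key fact about adding many Cohen reals is a ccc, homogeneity, and factorization package. Let me recall the structure of such arguments. Suppose $E$ is $\sigma$-projective with a real parameter $r$. In the extension $V[G]$ where $G$ is generic for $\mathrm{Add}(\omega,\kappa)$ with $\kappa \geq \aleph_2$, I want to show: if there is no perfect set of $E$-inequivalent reals, then there are at most $\aleph_1$ classes. The contrapositive or a direct counting is natural. The parameter $r$ appears in $V[G\restriction a]$ for some countable $a \subseteq \kappa$. Every real of $V[G]$ lies in $V[G\restriction b]$ for some countable $b$. So the $E$-classes are ``covered'' by the $\aleph_2$-many (or $\kappa$-many) intermediate countable-support... no, Cohen forcing is finite-support, but each real still lands in a countable piece.

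Here is the heart of it. I would use a mutual-genericity/absoluteness argument to show that the number of $E$-classes realized is governed by a single countable piece of the extension, unless a perfect set appears. Concretely: let $V_0 = V[G\restriction a]$ contain the parameter. Because $\mathrm{Add}(\omega,\kappa)$ is homogeneous and all the ``new'' Cohen reals are mutually generic over $V_0$, any two reals $x \in V[G\restriction b]$ and $y \in V[G\restriction c]$ with $b,c$ disjoint from each other and from $a$ are Cohen-generic over $V_0$ in a symmetric way. The crucial claim to establish is a \emph{dichotomy}: for a $\sigma$-projective equivalence relation, either the set of classes realized by Cohen-generic reals over $V_0$ is already captured (up to $\aleph_1$) inside $V_0$ together with $\aleph_1$-many classes, or two mutually generic Cohen reals are $E$-inequivalent, which one then amplifies into a perfect set of inequivalent reals by the standard perfect-set-from-a-single-splitting construction (a Silver/Mycielski-style argument using a perfect tree of mutually generic conditions).

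The main technical input I expect to need, and the step I expect to be the main obstacle, is a \emph{$\sigma$-projective absoluteness} lemma for ccc (Cohen) forcing over a model of $\CH$: namely that $\sigma$-projective statements with parameters are absolute between $V_0$ and its Cohen extensions, at least to the degree required to run the dichotomy. For finite levels of the projective hierarchy this is classical Shoenfield/Solovay-type absoluteness, but $\sigma$-projective sets climb through all countable ordinals, so I would need to push absoluteness up the $\sigma$-projective hierarchy by transfinite recursion on the definition of $E$, checking at each successor step (projection/complement) and at each countable-intersection step that genericity is preserved. The subtlety is that under large cardinals one gets such absoluteness from determinacy, whereas here I must extract just enough from $\CH$-in-the-ground-model plus the ccc and Knaster-type properties of Cohen forcing; I suspect the role of $\CH$ is precisely to ensure that the relevant trees or Borel codes used in the $\sigma$-projective definition have size $\aleph_1$, so that the counting gives the bound $\aleph_1$ rather than $2^{\aleph_0}$.

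Assembling the pieces, the argument would run as follows. First I would fix $E$ $\sigma$-projective with parameter $r \in V[G\restriction a]$, $a$ countable, and set $V_0 = V[G\restriction a]$, noting $V_0 \models \CH$. Second, I would prove the absoluteness lemma for $\sigma$-projective formulas under Cohen forcing over $V_0$. Third, I would prove the dichotomy: either two mutually $V_0$-generic Cohen reals are always $E$-equivalent, in which case I show every class realized in $V[G]$ is represented by a real in a fixed $V[G\restriction b]$ with $|b| \leq \aleph_1$ and count at most $\aleph_1$ classes; or there exist two mutually generic Cohen reals that are $E$-inequivalent, in which case I build a perfect set of pairwise mutually generic Cohen reals (a perfect set of branches through a Cohen tree, pairwise mutually generic over $V_0$) all of which are pairwise $E$-inequivalent by applying inequivalence along each splitting node. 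The perfect-set construction is the standard fusion argument; the inequivalence is guaranteed by the first horn's failure together with homogeneity. Finally, I would conclude that the negation of ``perfect set of inequivalent reals'' forces the at-most-$\aleph_1$ alternative, which is exactly Second-order Absolute Morley for $\sigma$-projective equivalence relations in $V[G]$.
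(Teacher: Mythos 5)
Your overall architecture does mirror the paper's: a $\sigma$-projective generic absoluteness lemma proved by induction on complexity, a dichotomy driven by mutual genericity, a perfect tree of mutually generic Cohen reals in the ``bad'' case, and a $\CH$ count in the intermediate model in the ``good'' case. But two of the load-bearing steps are either misphrased or left without their actual mechanism, and as stated they would not go through.

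First, the dichotomy. You split on whether ``two mutually $V_0$-generic Cohen reals are always $E$-equivalent.'' That is not the right hinge, and its first horn does not yield the counting: a new $E$-class appearing in $V_0[c]$ is represented by some real $\dot{\sigma}^c$ that need not itself be Cohen-generic over $V_0$, so knowing how the Cohen reals themselves relate under $E$ controls nothing. The correct dichotomy is whether $\mathrm{Add}(\omega,1)$ over the intermediate model adds a real $E$-inequivalent to \emph{every} old real. Moreover, the implication ``new class $\Rightarrow$ the two evaluations $\dot{\sigma}_{\mathrm{left}}, \dot{\sigma}_{\mathrm{right}}$ of that name under mutually generic Cohen reals are $E$-inequivalent'' is not automatic; it is proved by an automorphism of $\mathrm{Add}(\omega,\lambda)\times \mathrm{Add}(\omega,1)\times \mathrm{Add}(\omega,1)$ that swaps the left Cohen coordinate with an unused coordinate $\gamma$ of the long product, turning $\pi(\dot{\sigma}_{\mathrm{left}})$ into a real of the big ground model and contradicting ``inequivalent to all old reals.'' Your proposal never supplies this step. (The subsequent perfect-set construction is fine in spirit; the paper realizes it as a countable forcing $Q$, itself equivalent to Cohen forcing, whose generic is a perfect tree with mutually generic branches, and transfers pairwise inequivalence from $V^*[b,b']$ to $V^*[T]$ using the absoluteness lemma again.)

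Second, the absoluteness lemma. Shoenfield/Solovay absoluteness stops at the bottom of the projective hierarchy and cannot be ``pushed up by transfinite recursion'' without a new idea; ``checking that genericity is preserved'' is not the mechanism. What actually makes the induction work is (i) the homogeneity of Cohen forcing, which upgrades ``some condition forces $\varphi$'' to ``$\mathbb{1}$ forces $\varphi$'' and thereby disposes of the negation/$\mathbf{\Pi}^1_\xi$ step, and (ii) for the existential step, an automorphism that moves the countable support of the name for a witness $\dot{x}$ from the future coordinates into the block of $\aleph_1$-many Cohen reals already added, so that the witness is absorbed into the intermediate model. This is precisely why the lemma is stated over $V^{\mathrm{Add}(\omega,\omega_1)\times \mathrm{Add}(\omega,\lambda)}$: you need $\aleph_1$ Cohen coordinates below to have room to absorb. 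Relatedly, your guess that $\CH$ powers the absoluteness is off: $\CH$ plays no role in the absoluteness lemma and is used only once, at the very end, to bound the number of reals (hence classes) in the intermediate $\mathrm{Add}(\omega,A)$-extension by $\aleph_1$, where $|A|=\aleph_1$ is chosen to contain $\omega_1$ and the support of the name for $E$.
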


The main idea of both \ref{thm:refute} and Theorem C occurs in the earlier work \cite{FM95}, in which Foreman and Magidor prove:

\begin{thm*}[Theorem B]
    In the usual iterated forcing model of $\PFA$ (thus assuming the existence of a supercompact cardinal), if $E$ is an equivalence relation on $\R$ such that $E$ is a member of $L(\R)$, then $E$ has either no more than $\aleph_1$ equivalence classes or else perfectly many equivalence classes.
\end{thm*}

The key is \emph{generic absoluteness}. Call an equivalence relation on sets of reals \emph{thin} if it does not have a perfect set of equivalence classes. Simplifying the argument by considering $E$'s definable from a real $r$, we note they prove (using the large cardinal) that the formula $\phi$ that defines $E$ defines a thin equivalence relation $E'$ in cofinally many intermediate models that contain $r$ and satisfy $\mathrm{CH}$. This is \emph{downwards generic absoluteness}. In any of such intermediate models, since $\CH$ holds, $E'$ has $\leq\aleph_1$ equivalence classes. Next they show that the rest of the forcing (after which the formula $\phi$ defines $E$) cannot add a new equivalence class to $E'$ unless $E$ has perfectly many equivalence classes in the final model. This is \emph{upwards generic absoluteness}. \cite{Eagle2023} follows the same approach but with a weaker large cardinal hypothesis. We follow the same approach here, but it turns out that because Cohen real forcing is so simple and homogeneous and because adding one Cohen real by forcing adds perfectly many, we don't need the large cardinal. This latter observation substitutes for upwards generic absoluteness, and hence no large cardinal is needed for that. If we add $\aleph_2$ many Cohen reals, then the real that codes the $\sigma$-projective set appears at an initial stage at which $\CH$ holds. If we add more than $\aleph_2$ Cohen reals, we need to apply an automorphism argument to get that without loss of generality, we may assume $r$ appears in the first $\omega_1$ stages. The required downwards generic absoluteness is proved by induction on the complexity of the $\sigma$-projective formulas that define our equivalence relations. A version of our generic absoluteness theorem was proved by Joan Bagaria many years ago with essentially the same proof, but he never published it. We rediscovered it and are not aware of any published reference. Now for the details.

Let us start with the definition of $\sigma$-projective formulas and $\sigma$-projective sets of reals. For the semantics of the infinitary logic $\mathcal{L}_{\omega_1,\omega}$, we refer the reader to \cite{Marker}.

\begin{definition}
Let $\phi$ be a formula in the language $\mathcal{L}_{\omega_1, \omega}$. 
\begin{enumerate}
\item $\phi$ is $\Sigma^1_0(\vec{x})$ if it is a disjunction of quantifier-free formulas with no infinite connectives with finitely many free variables $\vec{x}=(x_0,\cdots, x_{k-1})$ where $k\in \omega$.
\item $\phi$ is $\Pi^1_\alpha(\vec{x})$ if it is $\mathcal{L}_{\omega_1, \omega}$-equivalent to the negation of a $\Sigma^1_{\alpha}(\vec{x})$ formula.
\item $\phi$ is $\Sigma^1_{\alpha+1}(\vec{x})$ if it is of the form $\exists y \psi(y,\vec{x})$ where $\psi(y,\vec{x})$ is a $\Sigma^1_{\alpha}(y,\vec{x})$ formula.
\item $\phi$ is $\Sigma^1_{\alpha}(\vec{x})$ where $\alpha<\omega_1$ is a limit ordinal, if it is of the form $\bigvee_{k\in \omega} \phi_k$ where each $\phi_k$ is a $\Sigma^1_{\alpha_k}(\vec{x})$ formula for some $\alpha_k<\alpha$.
\end{enumerate}
\end{definition}

We say a formula $\phi$ in the language $\mathcal{L}_{\omega_1, \omega}$ is \emph{$\sigma$-projective} if it is $\Sigma^1_{\alpha}(\vec{x})$ for some $\alpha<\omega_1$. Recall that $H(\omega_1)$ is the collection of all hereditarily countable sets.

\begin{definition}
A set of reals $A$ is $\Sigma^1_\alpha(\vec{y})$ for $\alpha<\omega_1$ and reals $\vec{y}$, if there exists a $\Sigma^1_\alpha(\vec{x})$ formula $\phi(z, \vec{x})$ such that $A=\{r\in \mathbb{R}: H(\omega_1)\models_{\mathcal{L}_{\omega_1,\omega}}\phi(r,\vec{y})\}$.
\end{definition}

We say $A$ is \emph{$\sigma$-projective} if there is an $\alpha<\omega_1$ and a finite tuple of reals $\vec{y}$ such that $A$ is $\Sigma^1_\alpha(\vec{y})$. It is not hard to see that the collection of $\sigma$-projective sets is exactly the smallest $\sigma$-algebra on $\mathbb{R}$ containing the open sets and closed under continuous images. We refer the reader to \cite{AMS21} and \cite{Aguilera} for more information. In what follows, for cardinals $\alpha,\beta$, $Add(\alpha, \beta)$ is the standard Cohen poset for adding $\beta$ many Cohen subsets of $\alpha$. Namely, conditions are partial functions from $\beta \to 2$ with domain of size $<\alpha$, ordered by inclusion.

\begin{lemma}\label{lemma: 2step}
For any cardinals $\lambda,\kappa$, the following is true in $V^{Add(\omega,\omega_1)\times Add(\omega,\lambda)}$: for any $\sigma$-projective formula $\varphi(\bar{x})$ and any $\bar{a}\in \mathbb{R}^{|\bar{x}|}$
$$H(\omega_1)\models_{\mathcal{L}_{\omega_1, \omega}}\varphi(\bar{a}) \text{ if and only if } \Vdash_{Add(\omega,\kappa)} ``H(\omega_1)\models_{\mathcal{L}_{\omega_1,\omega}} \varphi(\bar{a})".$$
\end{lemma}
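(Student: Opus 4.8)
The plan is to reduce the lemma to a comparison between $W:=V^{Add(\omega,\omega_1)\times Add(\omega,\lambda)}$ and its further Cohen extension $W[K]$, where $K$ is $Add(\omega,\kappa)$-generic over $W$, and then to run an induction on the $\sigma$-projective rank of $\varphi$. Since $Add(\omega,\kappa)$ is weakly homogeneous and $\bar a$ is a tuple of reals of $W$, the statement ``$H(\omega_1)\models\varphi(\bar a)$'' with the parameter $\bar a$ is decided by the trivial condition, so ``$\Vdash_{Add(\omega,\kappa)}H(\omega_1)\models\varphi(\bar a)$'' is equivalent to ``$H(\omega_1)^{W[K]}\models\varphi(\bar a)$''. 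It therefore suffices to prove, for every $\sigma$-projective $\varphi$ and every tuple $\bar a$ of reals of $W$, that $H(\omega_1)^{W}\models\varphi(\bar a)$ if and only if $H(\omega_1)^{W[K]}\models\varphi(\bar a)$. I would state the induction uniformly over all ground models $V$ and all $\lambda,\kappa$, so that the inductive hypothesis at a given rank is available for every model of the relevant form.

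Three of the four clauses are routine. For a $\Sigma^1_0$ formula, truth at $\bar a$ depends only on $\bar a$ and the fixed formula and so is absolute between $H(\omega_1)^{W}$ and $H(\omega_1)^{W[K]}$; the $\Pi^1_\alpha$ clause follows by negating the inductive hypothesis; and a limit disjunction $\varphi=\bigvee_{k}\phi_k$ reduces to the inductive hypotheses for the $\phi_k$ because the index set $\omega$ is the same in both models. This leaves the clause $\varphi=\exists y\,\psi(y,\bar x)$ with $\psi$ of lower rank. The forward implication is easy: if $b\in W$ witnesses $\psi(b,\bar a)$ in $H(\omega_1)^{W}$, then by the inductive hypothesis $H(\omega_1)^{W[K]}\models\psi(b,\bar a)$, so the witness persists upward.

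The hard part will be the downward implication for the existential clause: given a witness $b'\in W[K]$ with $H(\omega_1)^{W[K]}\models\psi(b',\bar a)$, I must manufacture a witness already in $W$, and this is exactly where the reservoir of $\omega_1$ Cohen reals enters. Let $I=\omega_1\sqcup\lambda$ index the Cohen coordinates of $W$ and $E=I\sqcup\kappa$ those of $W[K]$, and choose a countable $S_0\subseteq I$ large enough that both $\bar a$ and the (hereditarily countable) formula $\varphi$ lie in $V_0:=V[\mathbb{G}\restr S_0]$. Then $\bar a$ and $\psi$ are parameters over $V_0$, while $W=V_0[\mathbb{G}\restr(I\setminus S_0)]$ and $W[K]=V_0[\mathbb{G}\restr(E\setminus S_0)]$ are Cohen extensions of $V_0$, and $I\setminus S_0$ still carries $\omega_1$ coordinates. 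Fixing a name $\dot b$ for $b'$ with countable support $D\subseteq E\setminus S_0$ and a condition $p$ with $\mathrm{supp}(p)\subseteq D$ forcing $H(\omega_1)\models\psi(\dot b,\check{\bar a})$, I can --- since $D$ is countable and $I\setminus S_0$ uncountable --- pick a permutation $\sigma$ of $E\setminus S_0$ carrying $D$ into $I\setminus S_0$. The induced automorphism $\pi$ of the Cohen forcing over $V_0$ fixes $\check{\bar a}$ and moves $\dot b$ and $p$ to objects supported on $I\setminus S_0$, yielding $\pi p\Vdash H(\omega_1)\models\psi(\pi\dot b,\check{\bar a})$ with $\pi\dot b$ a name in the very forcing that produces $W$.

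To finish I would combine the inductive hypothesis with homogeneity. Reading the forced statement in any extension $V_0[h]$ with $\pi p\in h$, and applying the inductive hypothesis for $\psi$ --- the lemma at the lower rank, for the model $V_0[h]$, which is again of the form $V^{Add(\omega,\omega_1)\times Add(\omega,\lambda)}$ --- to the $V_0[h]$-parameters $(\pi\dot b)^{h}$ and $\bar a$, I descend from the $Add(\omega,\kappa)$-extension to $V_0[h]$ and obtain $\pi p\Vdash_{Add(\omega,I\setminus S_0)}H(\omega_1)\models\exists y\,\psi(y,\check{\bar a})$. As this statement has only the $V_0$-parameter $\bar a$ and $Add(\omega,I\setminus S_0)$ is weakly homogeneous over $V_0$, it is forced by the trivial condition and hence holds in $W=V_0[\mathbb{G}\restr(I\setminus S_0)]$, producing the desired witness. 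The two points needing care are that the automorphism really deposits the witness name inside the $W$-forcing, which is precisely what the uncountable supply of spare coordinates guarantees, and that the descent is a genuine instance of the lower-rank inductive hypothesis rather than a circular appeal to the statement under proof.
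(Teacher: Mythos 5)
Your proposal is correct and follows essentially the same route as the paper: an induction on $\sigma$-projective rank in which homogeneity of Cohen forcing handles negations and reduces the forcing statement to truth in the extension, and the existential step is settled by an automorphism of the Cohen coordinates that relocates the countable support of the witness name into the spare $\omega_1$-block, after which the lower-rank inductive hypothesis pulls the witness down. Your repackaging of the automorphism as a permutation over an intermediate model $V_0$ containing the parameters is only a cosmetic variant of the paper's coordinate-swap $\pi$.
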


\begin{remark}
Since all the $\mathcal{L}_{\omega_1, \omega}$-formulas we consider use real parameters and all the quantifiers are bounded by the set of real numbers, they are absolute between $V$ and $H(\omega_1)$, since the latter contains all reals and is closed under countable sequences.
\end{remark}

\begin{proof}
We induct on the complexity of the formulas. Observe that the homogeneity of Cohen forcing implies that if for some $p$, $p\Vdash_{Add(\omega,\kappa)} H(\omega_1)\models_{\mathcal{L}_{\omega_1,\omega}} \varphi(\bar{a})$, then $\Vdash_{Add(\omega,\kappa)} H(\omega_1)\models_{\mathcal{L}_{\omega_1,\omega}} \varphi(\bar{a})$. In particular, this means if we have proved the result for $\mathbf{\Sigma}^1_\xi$ formulas, then we get the result for $\mathbf{\Pi}^1_\xi$ formulas immediately.

Suppose we have proved the theorem for all formulas that are $\mathbf{\Sigma}^1_\nu$ for $\nu<\xi<\omega_1$ and $\xi$ is a limit, then $\varphi(\bar{x})$ is of the form $\bigvee_n \{\psi_n(\bar{x}): \psi_n\in \mathbf{\Sigma}^1_{\xi_n}\}$ for a sequence $\langle \xi_n<\xi: n\in \omega\rangle$. Let $G\times H\subseteq Add(\omega,\omega_1)\times Add(\omega,\lambda)$. In $V[G\times H]$, suppose $H(\omega_1)\models_{\mathcal{L}_{\omega_1, \omega}}\varphi(\bar{a})$, equivalently, there is an $n\in \omega$, $H(\omega_1)\models_{\mathcal{L}_{\omega_1, \omega}} \psi_n(\bar{a})$. By the induction hypothesis, we know that $\Vdash_{Add(\omega,\kappa)} H(\omega_1)\models_{\mathcal{L}_{\omega_1, \omega}} \psi_n(\bar{a})$. As a result, $\Vdash_{Add(\omega,\kappa)} H(\omega_1)\models_{\mathcal{L}_{\omega_1, \omega}} \varphi(\bar{a})$. A similar argument shows that if $H(\omega_1)\models_{\mathcal{L}_{\omega_1, \omega}} \neg \varphi(\bar{a})$, then $\Vdash_{Add(\omega,\kappa)}H(\omega_1)\models_{\mathcal{L}_{\omega_1, \omega}} \neg \varphi(\bar{a})$.

Suppose we have proved the theorem for $\mathbf{\Sigma}_{\xi}^1$-formulas for $\xi<\omega_1$. Let $\exists x \psi(x, \bar{y})$ be a $\mathbf{\Sigma}_{\xi+1}^1$-formula. Suppose $$V[G\times H]^{Add(\omega,\kappa)}\models ``H(\omega_1)\models_{\mathcal{L}_{\omega_1, \omega}}\varphi(\bar{a})",$$ where $\bar{a}\in V[G\times H]$. Given $(p,q)\in P=_{def} Add(\omega,\omega_1)\times Add(\omega,\lambda)$ and $\dot{x}$ a $P\times Add(\omega,\kappa)$-name and $\dot{\bar{a}},\dot{\varphi}$ $P$-names such that $(p,q)$ forces the above holds for the respective names, find $\alpha<\omega_1, A\subseteq \lambda,B\subseteq \kappa$ countable such that these conditions and names are in $Add(\omega,\alpha)\times Add(\omega, A)\times Add(\omega, B)$ or are $Add(\omega,\alpha)\times Add(\omega, A)\times Add(\omega, B)$-names.

Define an automorphism $\pi$ on $P\times Add(\omega, \kappa)$ as follows: $\pi(r,s,t)=(r^*,s^*, t^*)$ if and only if
\begin{enumerate}
\item $r\restriction \omega_1 - [\alpha, \alpha+otp(B)) = r^*\restriction \omega_1 - [\alpha, \alpha+otp(B))$, $s= s^*$,
\item $r^*(\alpha+i)=t(i)$, $t^*(i)=r(\alpha+i)$ for all $i\in B$,
\item $t^*\restriction \kappa-B = t\restriction \kappa-B$.
\end{enumerate}

In particular, $\pi$ fixes $(p,q)$, $\dot{\bar{a}}$ and $\dot{\varphi}$. Therefore, $(p,q, \emptyset)\Vdash_{P\times Add(\omega,\kappa)} \psi(\pi(\dot{x}), \dot{\bar{a}})$. 

Let $G\times H\times R \subseteq P\times Add(\omega,\kappa)$ be generic containing $(p,q ,\emptyset)$. Then we know that $V[G\times H\times R]\models H(\omega_1)\models_{\mathcal{L}_{\omega_1,\omega}}\psi((\pi(\dot{x})^{G\times H\times R}), \bar{a})$. Let $x^*  =\pi(\dot{x})^{G\times H\times R}$. By the definition of $\pi$, we know that $\pi(\dot{x})$ is an $Add(\omega, \omega_1)\times Add(\omega,\lambda)$-name, in particular, $x^*\in V[G\times H]$. By the induction hypothesis, we know that $V[G\times H\times R]\models H(\omega_1)\models_{\mathcal{L}_{\omega_1,\omega}} \psi(x^*, \bar{a})$ if and only if $V[G\times H]\models H(\omega_1)\models_{\mathcal{L}_{\omega_1,\omega}} \psi(x^*, \bar{a})$. Therefore, $V[G\times H]\models H(\omega_1)\models_{\mathcal{L}_{\omega_1,\omega}}\exists x \psi(x, \bar{a})$, as desired. \end{proof}
\begin{remark}
The result holds in a more general context, namely, after adding $\aleph_1$ many Cohen reals, there exists an elementary embedding $j: (L(\mathbb{R}))^V \to (L(\mathbb{R}))^{V[G]}$ that is identity on the ordinals where $G$ is any further Cohen extension. In particular, all $\mathcal{L}_{\omega_1,\omega}$ sentences with reals and ordinals as parameters are absolute between $L(\mathbb{R})$ and $(L(\mathbb{R}))^{V[G]}$.
This was known to Bagaria and probably others. The proof we supply is only for what we need.
\end{remark}

\begin{lemma}\label{lemma: mutual}
For any $\lambda\geq \omega_1$, the following holds in $V^{Add(\omega, \lambda)}$: 
let $E$ be a $\sigma$-projective equivalence relation; if $\Vdash_{Add(\omega,1)} ``\exists \sigma\in \mathbb{R}$ such that for all $r\in V$, $\neg (\sigma E r)$", then $\Vdash_{Add(\omega,1)}$ there exist perfectly many $E$-classes.
\end{lemma}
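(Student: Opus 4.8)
The plan is to work in $W := V^{Add(\omega,\lambda)}$ and to produce directly, in the one-step Cohen extension $W[c]$, a perfect set of pairwise $E$-inequivalent reals. Since $\lambda\ge\omega_1$ we may write $W = V^{Add(\omega,\omega_1)\times Add(\omega,\lambda)}$, and any further one-Cohen extension $W[e]$ is again of this form, so Lemma~\ref{lemma: 2step} provides absoluteness of $\sigma$-projective statements (with real parameters in $W$) between $W$ and its Cohen extensions. Reading the hypothesis as asserting the existence of a real avoiding all reals of the ground model $W$, the maximal principle furnishes an $Add(\omega,1)$-name $\dot\sigma$ with $\Vdash_{Add(\omega,1)}\forall r\in\check{W}\ \neg(\dot\sigma\,E\,r)$. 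The first concrete step is to decode the single Cohen real $c$ into a continuously indexed family $\langle e_x : x\in 2^\omega\rangle$ of pairwise mutually generic Cohen reals over $W$, using the isomorphism $Add(\omega,1)\cong Add(\omega,2^{<\omega})$ (a standard construction). Setting $\sigma_x := \dot\sigma^{e_x}$, the map $x\mapsto\sigma_x$ is continuous (a nice $Add(\omega,1)$-name induces a continuous map on generics meeting its deciding antichains), and each $\sigma_x$ avoids $W$ up to $E$. Thus everything reduces to the claim that $x\neq y \implies \neg(\sigma_x\,E\,\sigma_y)$: granting this, $x\mapsto\sigma_x$ is a continuous injection of $2^\omega$, whose image is a perfect set of pairwise $E$-inequivalent reals.

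The heart of the argument is this pairwise-inequivalence claim, and I would prove it by contradiction using mutual genericity. Fix $x\neq y$ and suppose $\sigma_x\,E\,\sigma_y$ holds in $W[e_x][e_y]$. Freeze the generic $e_x$ first; since $e_y$ is Cohen over $W[e_x]$ while $\sigma_x\in W[e_x]$, the equivalence $\sigma_x\,E\,\dot\sigma^{e_y}$ is forced by some condition of the $e_y$-copy of $Add(\omega,1)$ over $W[e_x]$. Hence, in $W[e_x]$, some $q$ satisfies $q\Vdash_{Add(\omega,1)}\dot\sigma\,E\,\check{\sigma_x}$, so $W[e_x]\models\exists t\,\Psi(t)$, where $\Psi(t):\equiv \exists q\,\bigl(q\Vdash_{Add(\omega,1)}\dot\sigma\,E\,\check{t}\bigr)$.

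I would then reflect this witness down to $W$. The statement $\exists t\,\Psi(t)$ is a $\sigma$-projective statement with parameters in $W$ (a real coding $\dot\sigma$, which exists because $Add(\omega,1)$ is countable and hence its nice names for reals are hereditarily countable). Because $W[e_x]$ is a Cohen extension of $W$, Lemma~\ref{lemma: 2step} gives $W\models\exists t\,\Psi(t)$: there are $t_0\in W$ and a condition $q_0$ with $q_0\Vdash_{Add(\omega,1)}^{W}\dot\sigma\,E\,\check{t_0}$. Taking a Cohen generic through $q_0$ yields a value of $\dot\sigma$ that is $E$-equivalent to the ground-model real $t_0$, directly contradicting the hypothesis $\Vdash_{Add(\omega,1)}\forall r\in\check{W}\ \neg(\dot\sigma\,E\,r)$. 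This contradiction establishes $\neg(\sigma_x\,E\,\sigma_y)$ and completes the proof.

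I expect the main obstacle to be verifying that $\Psi$ falls within the scope of Lemma~\ref{lemma: 2step}, i.e.\ that the Cohen forcing relation restricted to $\sigma$-projective formulas is itself $\sigma$-projective (uniformly in a real coding the name $\dot\sigma$). This should be handled by an induction on $\sigma$-projective rank mirroring the definition of the hierarchy, crucially exploiting that $Add(\omega,1)$ is countable so that the relation $q\Vdash\cdots$ can be expressed by bounded quantification over $H(\omega_1)$; it is the one step where the homogeneity and simplicity of Cohen forcing are doing essential work. A secondary, more routine point to pin down is the decoding step, namely arranging that the perfect family $\langle e_x\rangle$ consists of genuinely pairwise mutually generic Cohen reals so that $e_y$ really is Cohen over $W[e_x]$; this is a standard feature of Cohen forcing, reflecting the observation from the introduction that adding one Cohen real adds perfectly many.
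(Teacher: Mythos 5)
Your global architecture matches the paper's: extract from one Cohen real a perfect family of mutually generic Cohen reals over $W=V^{Add(\omega,\lambda)}$, evaluate the avoiding name $\dot\sigma$ along each of them, and transfer the pairwise inequivalence to the final model via Lemma~\ref{lemma: 2step}. (The paper packages the ``perfectly many mutually generic Cohen reals'' step as an explicit countable forcing $Q$ of finite labelled trees, which also makes continuity and injectivity of $b\mapsto\sigma_b$ immediate; your decoding via $Add(\omega,1)\cong Add(\omega,2^{<\omega})$ is fine, modulo the small point that $e_x$ and $e_y$ share the coordinates indexed by the common initial segment of $x$ and $y$, so mutual genericity holds only after a finite-modification argument.)

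The real divergence is in the key claim that $x\neq y$ implies $\neg(\sigma_x\,E\,\sigma_y)$, and here your route rests on a lemma you name but do not prove: that the $Add(\omega,1)$-forcing relation for $\sigma$-projective formulas, with names coded by reals, is itself $\sigma$-projective uniformly in those codes, so that $\Psi(t)\equiv\exists q\,(q\Vdash\dot\sigma\,E\,\check t)$ falls under Lemma~\ref{lemma: 2step}. That statement is true and your sketch (induction on $\sigma$-projective rank, using countability of the poset and real quantification over nice names) is the right way to prove it, but it is a second induction of essentially the same weight as Lemma~\ref{lemma: 2step} itself, and without it your reflection of $\exists t\,\Psi(t)$ from $W[e_x]$ down to $W$ is unjustified; as written this is the gap in the argument. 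The paper avoids the issue entirely with a purely combinatorial device: in Claim~\ref{claim:notEequiv} it takes a condition of $Add(\omega,\lambda)\times Add(\omega,1)\times Add(\omega,1)$ forcing $\dot\sigma_{\mathrm{left}}\,E\,\dot\sigma_{\mathrm{right}}$ and applies an automorphism swapping the left $Add(\omega,1)$ factor with an unused coordinate $\gamma$ of $Add(\omega,\lambda)$; this turns $\dot\sigma_{\mathrm{left}}$ into an $Add(\omega,\lambda)$-name, i.e.\ a ground-model real of $W$, contradicting the hypothesis directly with no appeal to definability of forcing. If you want to keep your route, you must actually carry out the definability induction; otherwise the automorphism argument is the more economical replacement for your reflection step.
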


\begin{proof}

Let $\dot{\sigma}$ be an $Add(\omega,1)$-name such that $\Vdash_{Add(\omega,1)} ``\neg \dot{\sigma} E r$ for any $r\in V$".

	\begin{claim}\label{claim:notEequiv}
	$\Vdash_{Add(\omega,1)\times Add(\omega,1)}\neg \dot{\sigma}_{\mathrm{left}} E \dot{\sigma}_{\mathrm{right}}$, where $\dot{\sigma}_{\mathrm{left}}$ ($\dot{\sigma}_{\mathrm{right}}$) is the name $\dot{\sigma}$ produced from the left (right) generic.
	\end{claim}
	\begin{proof}[Proof of the Claim]
	For notational simplicity, let $\dot{\sigma}_0=\dot{\sigma}_{\mathrm{left}}$ and $\dot{\sigma}_1=\dot{\sigma}_{\mathrm{right}}$.
	Suppose otherwise, let $(p,s,t)\in Add(\omega,\lambda)\times Add(\omega,1)\times Add(\omega,1)$ force that $\dot{\sigma}_0 E \dot{\sigma}_1$ and $\neg \dot{\sigma}_i E r$ for any $r\in V^{Add(\omega,\lambda)}$, where $i=0,1$. Let $A\subseteq \lambda$ be countable such that $p\in Add(\omega,A)$ and $\dot{E}$ is an $Add(\omega,A)$-name. More precisely, the defining formula and the parameters of $E$ are $Add(\omega, A)$-names. Fix some $\gamma\in \lambda-A$. Consider the following automorphism $\pi$ on $Add(\omega,\lambda)\times Add(\omega,1)\times Add(\omega,1)$: $\pi(a,b,c)=(a^*,b^*,c^*)$ where
	\begin{itemize}
	\item $a\restriction \lambda-\{\gamma\}=a^*\restriction \lambda-\{\gamma\}$,
	\item $a^*(\gamma)=b$,
	\item $b^*=a(\gamma)$,
	\item $c=c^*$.
	\end{itemize}
	Hence, $\pi(p,s,t)\Vdash \pi(\dot{\sigma}_0) \pi(\dot{E}) \pi(\dot{\sigma}_1)$. By the definition of $\pi$, we know that $\pi(p,s,t)$ is compatible with $(p,s,t)$, $\pi(\dot{E})=\dot{E}$, $\pi(\dot{\sigma}_1)=\dot{\sigma}_1$ and $\pi(\dot{\sigma}_0)$ is an $Add(\omega, \lambda)$-name. Let $G\times g_0\times g_1\subseteq Add(\omega,\lambda)\times Add(\omega,1)\times Add(\omega,1)$ be generic containing both $(p,s,t)$ and $\pi(p,s,t)$. Then in the generic extension we have that $\sigma^* E \sigma_1$ where $\sigma^* = (\pi(\dot{\sigma}_0))^{G\times g_0 \times g_1}\in V[G]$, since $\pi(\dot{\sigma}_0)$ is an $Add(\omega, \lambda)$-name. However, as $(p,s,t)\in G\times g_0\times g_1$, we have that $\neg(\sigma^* E \sigma_1)$, which is a contradiction.
	\end{proof}
Let $V^*=V[G]$ where $G\subseteq Add(\omega, \lambda)$ is generic over $V$. Work in $V^*$.

Consider the following forcing $Q$: $p\in Q$ if and only if $p: 2^{\leq n}\to Add(\omega,1)$ for some $n\in \omega$ such that
\begin{itemize}
\item for each $s\in 2^{\leq n}$, there is some $k_s\in \omega$ such that $p(s)\Vdash \dot{\sigma}\restriction k_{s}=\tau_s$,
\item for $s\neq s'\in 2^m$ and $m\leq n$, $\tau_{s}\perp \tau_{s'}$, namely $\tau_s\cup \tau_{s'}$ is not a function.
\item for $s\sqsubseteq s'\in 2^{\leq n}$, $p(s')\leq p(s)$ and $\tau_{s'}\sqsupset \tau_{s}$, namely $\tau_s$ is a proper initial segment of $\tau_{s'}$.
\end{itemize}
The order of $Q$ is inclusion.
Since $Q$ is a non-trivial countable forcing, $Q$ is forcing-equivalent to $Add(\omega,1)$.
Let $T\subseteq Q$ be generic over $V^*$. In $V^*[T]$, an easy density argument shows that $T$ is a perfect subtree of $2^{<\omega}$ and the branches of $T$ are mutually generic Cohen reals over $V^*$. Consider $\{\sigma_b: b\in [T]\}$. By Claim \ref{claim:notEequiv}, we know that if $b\neq b'\in [T]$, then in $V^*[b,b']$, $\neg \sigma_b E \sigma_{b'}$. By Lemma \ref{lemma: 2step}, in $V^*[T]$, $\neg \sigma_b E \sigma_{b'}$. It remains to see that $\{\sigma_b: b\in [T]\}$ is a perfect set. This is the case since $\sigma_b = \bigcup_{s\sqsubseteq b} \tau_s$ for any $b\in [T]$.
\end{proof}

\begin{theorem}
Fix a regular cardinal $\kappa\geq \omega_2$. Over a model of CH, $\Vdash_{Add(\omega,\kappa)}$ every $\sigma$-projective equivalence relation either has $\leq \aleph_1$ or perfectly many equivalence classes.
\end{theorem}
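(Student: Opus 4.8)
The plan is to run the downwards-then-upwards generic absoluteness argument sketched in the introduction, using Lemma \ref{lemma: 2step} for the downward direction and Lemma \ref{lemma: mutual} in place of upward absoluteness. Fix $V\models\mathrm{CH}$, let $G\subseteq Add(\omega,\kappa)$ be generic, and work in $V[G]$. Let $E$ be a $\sigma$-projective equivalence relation, say $x\mathbin{E}y$ iff $H(\omega_1)\models\varphi(x,y,\bar p)$ for a fixed $\sigma$-projective formula $\varphi$ and a finite tuple of real parameters $\bar p$. Assume $E$ does not have perfectly many classes; I must produce at most $\aleph_1$ classes. Since $Add(\omega,\kappa)$ is ccc, $\bar p\in V[G\restriction a]$ for some countable $a\subseteq\kappa$. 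Because automorphisms of the forcing preserve the statement being forced, it suffices to treat names for $\bar p$ whose support has been carried into $\omega_1$ by a permutation of $\kappa$; so I may assume $\bar p\in V_0:=V[G\restriction\omega_1]=V^{Add(\omega,\omega_1)}$. Note that $V_0\models\mathrm{CH}$ (adding $\aleph_1$ Cohen reals over a model of $\mathrm{CH}$ preserves it), and $V[G]=V_0[G']$ where $G'$ is $Add(\omega,\kappa)$-generic over $V_0$.

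Next I pass $E$ down to $V_0$. The sentences asserting that $\varphi$ is reflexive, symmetric and transitive are $\sigma$-projective with parameters in $V_0$, and they hold in $V[G]$; since $V_0=V^{Add(\omega,\omega_1)}$ and $V[G]$ is a further Cohen extension, Lemma \ref{lemma: 2step} makes them absolute, so $\varphi$ defines an equivalence relation $E_0$ on $\R^{V_0}$ in $V_0$, and for $a,b\in V_0$ one has $a\mathbin{E_0}b$ iff $a\mathbin{E}b$. Because $V_0\models\mathrm{CH}$, $E_0$ has at most $\aleph_1$ classes. It therefore suffices to prove the following claim: every real of $V[G]$ is $E$-equivalent to some real of $V_0$; for then the map sending each $E$-class to the $E_0$-class of a $V_0$-representative witnesses that $E$ has at most $\aleph_1$ classes.

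So suppose toward a contradiction that some $\sigma\in V[G]$ satisfies $\neg(\sigma\mathbin{E}r)$ for all $r\in V_0$. By ccc, $\sigma\in V_0[G'\restriction c]$ for a countable $c\subseteq\kappa$, and $V_0[G'\restriction c]=V_0[c^*]$ for a single Cohen real $c^*$ over $V_0$. Since $\neg(\sigma\mathbin{E}r)$ is $\sigma$-projective, Lemma \ref{lemma: 2step} shows it already holds in $V_0[c^*]$ for every $r\in V_0$. As the sentence ``$\exists\tau\,\forall r\in V_0\,\neg(\tau\mathbin{E}r)$'' has all its parameters in $V_0$, homogeneity of $Add(\omega,1)$ upgrades this witnessed instance to $\Vdash_{Add(\omega,1)}$ of that sentence over $V_0$. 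This is exactly the hypothesis of Lemma \ref{lemma: mutual} for the model $V_0=V^{Add(\omega,\omega_1)}$ (taking $\lambda=\omega_1$), so the lemma yields that $\Vdash_{Add(\omega,1)}$ over $V_0$ there are perfectly many $E$-classes; in particular $V_0[c^*]$ contains a perfect set of pairwise $E$-inequivalent reals. Finally, ``there are perfectly many $E$-classes'' is itself $\sigma$-projective, since it asserts the existence of a perfect tree all of whose distinct branches are $E$-inequivalent; so a last application of Lemma \ref{lemma: 2step}, now between $V_0[c^*]$ and its Cohen extension $V[G]$, transfers it to $V[G]$, contradicting our assumption. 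This proves the claim, and hence the theorem.

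The step I expect to require the most care is the correct choice of inner model at which to invoke Lemma \ref{lemma: mutual}: one must take the working model to be exactly $V_0=V^{Add(\omega,\omega_1)}$ so that ``$E$-inequivalent to every ground real'' in the lemma coincides with ``$E$-inequivalent to every real of $V_0$'', and one must observe that $\sigma$, though produced by the whole tail of the forcing, in fact lives in a one-Cohen-real extension of $V_0$, which is precisely what lets the single-Cohen-real Lemma \ref{lemma: mutual} apply. Everything else is bookkeeping with the two directions of the absoluteness in Lemma \ref{lemma: 2step} together with the routine automorphism normalization of the parameter $\bar p$.
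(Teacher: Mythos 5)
Your proposal is correct and follows essentially the same route as the paper's proof: normalize the parameters into an intermediate $\aleph_1$-sized Cohen extension satisfying CH, use Lemma \ref{lemma: 2step} for downward absoluteness of thinness and of individual (in)equivalences, and use Lemma \ref{lemma: mutual} plus homogeneity to show that no later Cohen real can introduce a new equivalence class without creating perfectly many. The only cosmetic difference is that the paper works with a set $A\subseteq\kappa$ of size $\aleph_1$ containing the support of the name rather than permuting that support into $\omega_1$; these are interchangeable.
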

\begin{proof}
Let $p\in Add(\omega,\kappa)$ and let $\dot{E}$ be a name for a thin $\sigma$-projective equivalence relation, namely, the $\sigma$-projective formula along with the parameters that define it. Since the $\sigma$-projective  formula and the parameters are essentially countable, there exists $A\subseteq \kappa$ such that $\omega_1\subseteq A$ and $|A|=\aleph_1$ such that $p\in Add(\omega, A)$ and $\dot{E}$ is an $Add(\omega , A)$-name. By Lemma \ref{lemma: 2step}, we know that $p\Vdash_{Add(\omega,A)} \dot{E}$ is thin. Indeed, if  some extension $q$ of $p$ forces that $\dot{E}$ is not thin, then Lemma \ref{lemma: 2step} implies that $\dot{E}$ is not thin in $V^{Add(\omega,\kappa)}$, which is impossible.
	\begin{claim}\label{claim: notadding}
	In $V^{Add(\omega,A)}$, $\Vdash_{Add(\omega,1)} $ ``for any $\sigma\in \mathbb{R}$, there is an $r\in V$ such that $\sigma E r$".
	\end{claim}
	\begin{proof}[Proof of the Claim]
	Otherwise, there is a $p\Vdash ``\exists \sigma\in \mathbb{R}$ such that $\neg (\sigma E r)$ for any $r\in V$". By the homogeneity of Cohen forcing, we have $\Vdash ``\exists \sigma\in \mathbb{R}$ such that $\neg (\sigma E r)$ for any $r\in V$". By Lemme \ref{lemma: mutual}, we know that $\Vdash_{Add(\omega,1)} $ ``there exist perfectly many $E$-classes". By Lemma \ref{lemma: 2step}, $\Vdash_{Add(\omega,\kappa)}$ ``there exist perfectly many $E$-classes", contradicting the assumption on the thinness of $E$.
	\end{proof}

Consequently, Claim \ref{claim: notadding} implies that in $V^{Add(\omega, A)}$, $\Vdash_{Add(\omega, \kappa-A)}$ ``for any $\sigma\in \mathbb{R}$, there is an $r\in V$ such that $\sigma E r$". Since $V^{Add(\omega, A)}$ is a model of CH, we have that the number of $E$-classes in $V^{Add(\omega, \kappa)}$ is $\leq \aleph_1$.
\end{proof}

We conclude the paper with some observations that the Second-order Absolute Morley in general is strictly stronger than its non-absolute version. 

\emph{Second-order Morley}: If $T$ is a second-order theory in a countable signature, then either $T$ has at most $\aleph_1$ isomorphism classes of countable models, or there are $2^{\aleph_0}$ many non-isomorphic countable models of $T$.

If $2^{\aleph_0} = \aleph_2$, Second-order Morley trivially holds. Although Second-order Absolute Morley holds in the Cohen model, Second-order Morley does not imply Second-order Absolute Morley in general. 

\begin{lemma}\label{lemma: dich}
Second-order Absolute Morley implies that for any light-face projective set $A$, either $A$ has size $\leq \aleph_1$ or $A$ includes a perfect set.
\end{lemma}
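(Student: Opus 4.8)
The plan is to realize $A$ as the set of isomorphism classes of countable models of a single second-order sentence, and then feed the dichotomy supplied by \emph{Second-order Absolute Morley} back through a definable decoding map.

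First I would fix a parameter-free projective formula $\varphi$ with $A=\{x\in 2^{\omega}:\varphi(x)\}$ (identifying reals with elements of $2^{\omega}=\mathcal{P}(\omega)$) and work in the signature consisting of the arithmetic symbols $+,\cdot,0,1,<$ together with a single unary predicate $P$. Let $T$ be the conjunction of (i) the second-order Peano axioms, whose induction axiom is a single second-order sentence, and (ii) a second-order sentence asserting ``$P\in A$''. The role of (i) is Dedekind categoricity under full second-order semantics: every countable model of $T$ has its arithmetic reduct isomorphic to the rigid standard model $\mathcal{N}=(\omega,+,\cdot,0,1,<)$, and this isomorphism is unique. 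For (ii), observe that over $\mathcal{N}$ the full second-order quantifiers range over \emph{all} subsets of the countable domain, i.e. over all reals, so the analytical (projective) statement $\varphi(P)$ is literally expressible as a second-order sentence in this signature. Since $\varphi$ is parameter-free, $T$ requires no real parameters, which is precisely where the light-face hypothesis enters.

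Next I would check that the map $M\mapsto x_M$, sending a countable model $M\models T$ to the real coded by $P^M$ under the unique identification of its arithmetic reduct with $\mathcal{N}$, induces a bijection between isomorphism classes of countable models of $T$ and elements of $A$. Rigidity of $\mathcal{N}$ makes this routine: $M\cong M'$ iff $x_M=x_{M'}$, and each $x\in A$ is realized by $(\mathcal{N},x)$. Hence $T$ has exactly $|A|$ isomorphism classes of countable models, and Second-order Absolute Morley applies; its first horn gives $|A|\le\aleph_1$ directly.

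The heart of the argument, and the step I expect to require the most care, is the second horn: from a perfect set $K$ of codes of pairwise non-isomorphic countable models of $T$ I must extract a perfect subset of $A$. I would define $f:K\to 2^{\omega}$ by $f(M)=x_M$, reading off $x_M(n)=1$ iff the unique element of $M$ satisfying the arithmetic-definable formula $\delta_n$ (pinning down the $n$-th numeral) lies in $P^M$. Since first-order satisfaction is Borel on the space of codes of countable structures, $f$ is Borel; by the bijection above it is injective on $K$; and $f[K]\subseteq A$ by construction. Thus $f[K]$ is an uncountable analytic subset of $A$, and the perfect set property for analytic sets (a theorem of $\mathsf{ZFC}$; see \cite{Kechris}) yields a perfect set contained in $f[K]\subseteq A$. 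The delicate points are confirming that the decoding $M\mapsto x_M$ is genuinely Borel (this rests on the uniqueness, hence definability, of the isomorphism with $\mathcal{N}$, which in turn rests on full second-order semantics) and reading ``perfect set of non-isomorphic models'' as a perfect set in the code space whose distinct points are non-isomorphic, so that the injectivity of $f$ needed for uncountability of $f[K]$ is available.
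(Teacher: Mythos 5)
Your proposal is correct and follows essentially the same route as the paper: the same theory (second-order arithmetic plus a unary predicate, with the projective definition of $A$ translated into a truth-preserving second-order sentence), the same use of categoricity to identify isomorphism classes of countable models of $T$ with elements of $A$, and the same application of the Morley dichotomy. The only difference is in the finishing move for the second horn: the paper verifies directly that the decoding map composed with the continuous injection $2^\omega\to \mathrm{Mod}_T$ is a continuous injection into $A$, so its image is itself perfect, whereas you observe that the decoding is Borel and injective and then invoke the perfect set property for analytic sets --- both are fine.
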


\begin{proof}
The proof is similar to that of Lemma 3.1 in \cite{Eagle2023}, adapted to the ``absolute'' scenario. Therefore, we will only sketch the proof. The second-order theory $T$ considered here is second-order Peano Arithmetic with an additional unary predicate $R$. Let $\mathcal{L}$ be the natural language for this. Intuitively speaking, this predicate is coding a real that belongs to $A$. Namely, our model will be of the form of $\mathcal{A}=(\omega, + , \cdot, <, 0,1, R^{\mathcal{A}})$ with $R^{\mathcal{A}}\in A$. There is a natural translation of a projective formula $\psi$ to a second-order $\mathcal{L}$-formula $\psi^{\mathcal{L}}$ that is truth-preserving (see \cite[8B.15]{Mos80} for more information).
In particular, if $\varphi(x)$ is a projective definition of $A$, then the requirement that $R^\mathcal{A} \in A$ can be expressed as $\mathcal{A}\models ``\exists Y ((\forall n ( n\in Y \leftrightarrow n\in R)) \wedge \varphi^\mathcal{L}(Y))$". As a result, the above procedure describes a second-order theory $T$ in the language $\mathcal{L}$ such that any $\mathcal{A}\models T$,  $R^\mathcal{A}\in A$. Note that for any $\mathcal{A}_0, \mathcal{A}_1\in Mod_T$ (namely, the collection of models for the theory $T$), $\mathcal{A}_0 \cong \mathcal{A}_1$ if and only if $R^{\mathcal{A}_0}=R^{\mathcal{A}_1}$. Also the natural product topology on $Mod_T$ is generated by $U_\sigma=\{\mathcal{A}\in Mod_T: \sigma\sqsubset R^{\mathcal{A}}\}$ for $\sigma\in 2^{<\omega}$.

Assume that $|A|>\aleph_1$.
Apply Second-order Absolute Morley. By the assumption that $|A|>\aleph_1$, $Mod_T$ must contain more than $\aleph_1$ many non-isomorphic models. In particular, there is a continuous injection $\pi': 2^\omega\to Mod_T$ such that the images of $\pi'$ are pairwise non-isomorphic. Let $\pi: 2^\omega\to A$ be defined on $2^\omega$ such that $\pi(\sigma)=R^{\pi'(\sigma)}$. We check that $\pi$ is a continuous injection. The fact that $\pi$ is an injection follows from the fact that for $\sigma_0\neq \sigma_1$, $\pi'(\sigma_0)\not\simeq \pi'(\sigma_1)$, which implies $R^{\pi'(\sigma_0)}\neq R^{\pi'(\sigma_1)}$. For any basic open set $[\sigma]_A =_{def} \{g\in A: \sigma\sqsubset g\}$ where $\sigma\in 2^{<\omega}$ and $f\in \pi^{-1}([\sigma]_A)$, since $\pi'$ is continuous and $f\in (\pi')^{-1}(U_\sigma)$, there is some $\tau\in 2^{<\omega}$ such that $f\in [\tau]=_{def}\{g\in 2^\omega: \tau\sqsubset g\}\subset (\tau')^{-1}(U_\sigma)$. For any $h\in [\tau]$, $\pi(h)=R^{\pi'(f)}\sqsupset \sigma$.
 As a result, $f\in [\tau] \subset \pi^{-1}([\sigma]_A)$. Hence $\pi$ is continuous.
	
\end{proof}

\begin{remark}
The proof above still works if $A$ is bold-face. In this case we just need to add a unary predicate for each of the real parameters in the definition of $A$.
\end{remark}

\begin{thm}\label{thm:Sigma1reflect}
    There is a model of $\ZFC$ in which $2^{\aleph_0} = \aleph_2$ and in which there is a projective equivalence relation on $\R$ which has $2^{\aleph_0}$-many equivalence classes but does not have a perfect set of equivalence classes.
\end{thm}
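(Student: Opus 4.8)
The goal of Theorem~\ref{thm:Sigma1reflect} is to separate \emph{Second-order Morley} from \emph{Second-order Absolute Morley}. The plan is to exploit the contrapositive of Lemma~\ref{lemma: dich}: if we can produce a model in which $2^{\aleph_0}=\aleph_2$ and some light-face (or bold-face) projective equivalence relation $E$ has exactly $2^{\aleph_0}=\aleph_2$ classes but admits no perfect set of inequivalent reals, then by Lemma~\ref{lemma: dich} this model must fail Second-order Absolute Morley (taking $A$ to be a transversal or a single class carefully chosen so that $A$ has size $>\aleph_1$ but contains no perfect set). Meanwhile, since $2^{\aleph_0}=\aleph_2$, Second-order Morley holds trivially in this model. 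Thus the theorem both furnishes the promised projective equivalence relation and witnesses the claimed non-implication.

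The concrete construction I would use is Cohen forcing over $L$. First I would force with $Add(\omega, \omega_2)$ over $L$ to obtain a model where $2^{\aleph_0}=\aleph_2$. The natural candidate for a thin-but-large projective set is a set of Cohen reals indexed by an $\aleph_2$-sized collection of coordinates, and the natural equivalence relation is the orbit equivalence relation of \say{mutual genericity} or, more simply, a $\bSigma^1_2$ (or $\bPi^1_1$) relation whose classes are controlled by the structure of $L$. The key classical fact I would invoke is that after adding $\aleph_2$ Cohen reals to $L$, there is a $\bSigma^1_2$ (in fact light-face $\Sigma^1_2$) set of reals, definable using the canonical wellordering of the constructible reals together with a Cohen-generic parameter, which has size $\aleph_2$ but contains no perfect subset. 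The non-existence of a perfect subset follows because a perfect set of reals would have to appear in some intermediate model $L[G\restriction X]$ for a set $X$ of size $<\aleph_2$, while genericity and a counting/absoluteness argument show no such perfect set can be captured; the size $\aleph_2$ follows from the $\aleph_2$ distinct Cohen reals, each giving a distinct class.

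Concretely, I would take $E$ to be the equivalence relation $x \mathrel{E} y$ iff $x=y$ or both $x,y$ lie outside the distinguished $\Sigma^1_2$ set $S$; then the $E$-classes are the singletons $\{s\}$ for $s\in S$ together with one large class $\R\setminus S$, so $E$ has exactly $|S|+1=\aleph_2$ classes. A perfect set of $E$-inequivalent reals would yield a perfect subset of $S$ (modulo at most one point in $\R\setminus S$), contradicting thinness of $S$. The precise $\Sigma^1_2$ witness $S$ I have in mind is the set of reals $c$ such that $c$ codes a countable ordinal $\alpha$ together with a real that is \emph{the} $L$-least Cohen-generic real over $L_\alpha$ relative to some fixed generic, the point being that thin $\Sigma^1_2$ sets of size $\aleph_2$ without perfect subsets are exactly what the Cohen-over-$L$ model is famous for producing.

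The main obstacle, and the step deserving the most care, is pinning down the exact definability class of $S$ and verifying simultaneously that (i) $|S|=\aleph_2$ and (ii) $S$ has no perfect subset in the final model, since both must hold after \emph{all} $\aleph_2$ Cohen reals are added, not merely in an intermediate model. For (ii) the heart of the matter is an absoluteness/reflection argument: any perfect set lives in an intermediate extension $L[G\restriction X]$ with $|X|=\aleph_1$, and one must show that the defining property of $S$ is sufficiently \say{local} (using a homogeneity or mutual-genericity argument in the spirit of Lemma~\ref{lemma: 2step} and Claim~\ref{claim:notEequiv}) that the reflected perfect set would already have to be a perfect set of genuinely $S$-elements in that small model, which is impossible by a cardinality count against the $\aleph_1$-many coordinates available. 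I would isolate this as a lemma: \emph{in the $Add(\omega,\omega_2)$-extension of $L$ there is a $\Sigma^1_2$ set $S$ with $|S|=\aleph_2$ and no perfect subset}, and then the theorem follows immediately by combining it with the construction of $E$ above and with Lemma~\ref{lemma: dich}.
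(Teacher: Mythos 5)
There is a genuine gap, and it is fatal to the proposed route. Your construction takes place in the $Add(\omega,\omega_2)$-extension of $L$, but that is precisely a model covered by the paper's main theorem (the dichotomy for $\sigma$-projective equivalence relations after adding at least $\aleph_2$ Cohen reals over a model of CH): in that model \emph{every} projective equivalence relation has either $\leq\aleph_1$ or perfectly many classes, so no counterexample of any projective complexity can live there. More concretely, the ``key classical fact'' you invoke is false outright: by Sierpi\'nski's theorem every $\bSigma^1_2$ set is a union of $\aleph_1$ Borel sets, and every uncountable Borel set contains a perfect subset, so in ZFC a $\bSigma^1_2$ set with no perfect subset has cardinality at most $\aleph_1$. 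A thin $\Sigma^1_2$ set of size $\aleph_2$ cannot exist in any model. What the Cohen-over-$L$ model is actually famous for producing is a thin $\bSigma^1_2$ set of size $\aleph_1$, namely $\R\cap L$; that is one cardinal short of what the theorem requires, and the proposed absoluteness argument for clause (ii) cannot be completed because clause (i) is unattainable at that definability level.

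The repair requires both a different model and a set higher in the projective hierarchy. The paper starts from a model (Caicedo--Schindler) with a \emph{light-face projective well-order of the reals} together with $2^{\aleph_0}=\aleph_2$ --- necessarily the well-order, and hence the witnessing set, sits at level $\bDelta^1_3$ or above --- and uses the well-order to carry out the classical Bernstein-set recursion definably: for each real $x$ one picks, $\preceq$-least, a pair $y_x,z_x$ of fresh reals (branches of the perfect tree coded by $x$, when there is one), putting $y_x$ into $B$ and promising $z_x\notin B$, so that $B$ is projective, has size $2^{\aleph_0}$, and neither $B$ nor $\R\setminus B$ contains a perfect set. Your final reduction from such a set to an equivalence relation (singletons inside the set plus one large class outside) is correct and matches the intent of Lemma \ref{lemma: dich}, but everything upstream of it must be replaced.
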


\begin{proof}
It is well-known that it is consistent that there is a light-face projective well order of the reals and $2^{\aleph_0}=\aleph_2$ (see \cite{CaicedoSchindler}). 
We use a projective well order of the reals to produce a projective set witnessing the failure of Lemma \ref{lemma: dich}. More precisely, we will construct a projective \emph{Bernstein set} $B$, i.e. such that neither $B$ nor $\R\setminus B$ includes a perfect set. In detail, let $\preceq$ be the projective well order. Recursively coding pairs of reals as reals, $\preceq$ induces a projective well order on pairs of reals that we will also call $\preceq$. For each real, we can ask if it codes a perfect subtree of $2^{<\omega}$ by some recursive bijection between $\omega$ and $2^{<\omega}$. Define $f: \R \to \R^2$ such that $f(x) = \langle y_x, z_x\rangle$ if:
    \begin{enumerate}[label=\arabic*)]
        \item\label{cond1} for any $x' \prec x$, $y_x$ and $z_x$ are not equal to either $y_{x'}$ or $z_{x'}$,
        \item\label{cond2} if $x$ codes a perfect subtree $T_x$ of $2^{<\omega}$, then $y_x$ and $z_x$ are different branches through $T_x$, the tree determined by $x$,
        \item $\langle y_x, z_x\rangle$ is the $\preceq$-least pair satisfying \ref{cond1} and \ref{cond2}.
    \end{enumerate}

    Since $\preceq$ is projective, so is $f$. Let
    \[B = \st{y}{\exists x \exists z(f(x) = \langle y, z\rangle)}.\]
    Then $y=y_x \in B$ implies $z_x \notin B$.

    Then $B$ is projective since $f$ is. However we claim that neither $B$ nor $\R\setminus B$ includes a perfect set, for if such a perfect set $P$ were realized as a perfect $T_x$, then $y_x$ would be in $B$ and $z_x$ would be in $\R\setminus B$, yet both are in $P$.
\end{proof}

\section{Acknowledgment}
We thank the referee for their comments, suggestions and corrections that greatly improve the quality of this paper.

%%%%%%%%%%%%%%%%%%%%%%%%%%%%%%%%%%%%%%%%%%%%%%%%%%
\bibliographystyle{amsplain}
\bibliography{ms}

\providecommand{\bysame}{\leavevmode\hbox to3em{\hrulefill}\thinspace}
\providecommand{\MR}{\relax\ifhmode\unskip\space\fi MR }
% \MRhref is called by the amsart/book/proc definition of \MR.
\providecommand{\MRhref}[2]{%
  \href{http://www.ams.org/mathscinet-getitem?mr=#1}{#2}
}
\providecommand{\href}[2]{#2}
\begin{thebibliography}{10}

\bibitem{Aguilera}
J.~Aguilera, \emph{$\sigma$-projective {D}eterminacy}, preprint,
  \url{https://www.dropbox.com/s/g853hj5ern1kkyb/SPReversal.pdf?dl=0}.

\bibitem{AMS21}
J.~P. Aguilera, S.~Müller, and P.~Schlicht, \emph{Long games and
  $\sigma$-projective sets}, Annals of Pure and Applied Logic \textbf{172}
  (2021), 102939.

\bibitem{Burgess}
J.~Burgess, \emph{Infinitary languages and descriptive set theory}, Ph.D.
  thesis, University of California at Berkeley, Berkeley, California, 1974.

\bibitem{CaicedoSchindler}
Andr\'{e}s~Eduardo Caicedo and Ralf Schindler, \emph{Projective well-orderings
  of the reals}, Arch. Math. Logic \textbf{45} (2006), no.~7, 783--793.
  \MR{2266903}

\bibitem{Eagle2023}
Christopher~J. Eagle, Clovis Hamel, Sandra M\"{u}ller, and Franklin~D. Tall,
  \emph{An undecidable extension of {M}orley's theorem on the number of
  countable models}, Ann. Pure Appl. Logic \textbf{174} (2023), no.~9, Paper
  No. 103317, 25. \MR{4617948}

\bibitem{FM95}
M.~Foreman and M.~Magidor, \emph{{Large cardinals and definable counterexamples
  to the continuum hypothesis}}, Annals of Pure and Applied Logic \textbf{76}
  (1995), no.~1, 47--97.

\bibitem{Gao}
S.~Gao, \emph{Invariant descriptive set theory}, Chapman and Hall/CRC, New
  York, 2008.

\bibitem{Kechris}
A.~{ }~S. Kechris, \emph{Classical descriptive set theory}, Springer-Verlag,
  New York, 1995.

\bibitem{Marker}
David Marker, \emph{Lectures on infinitary model theory}, Lecture Notes in
  Logic, vol.~46, Association for Symbolic Logic, Chicago, IL; Cambridge
  University Press, Cambridge, 2016. \MR{3558585}

\bibitem{Morley1970}
M.~Morley, \emph{The number of countable models}, J. Symb. Logic \textbf{35}
  (1970), 14--18.

\bibitem{Mos80}
Y.N. Moschovakis, \emph{Descriptive set theory}, Mathematical Surveys and
  Monographs, vol. 155, American Mathematical Society, Providence, R.I., 2009.

\bibitem{Vaught}
R.~Vaught, \emph{Denumerable models of complete theories}, Proc. Sympos.
  Foundations of Mathematics, Infinitistic Methods (Warsaw), Pergamon Press,
  1961, pp.~303--321.

\end{thebibliography}

\end{document}